\documentclass{amsart}
\usepackage{amsmath,amssymb,amscd,amsthm,verbatim,alltt,amsfonts,array}
\usepackage[english]{babel}
\usepackage{latexsym}
\usepackage{amssymb}
\usepackage{euscript}
\usepackage{graphicx}

\newtheorem{theorem}{Theorem}[section]
\theoremstyle{plain}

\newtheorem{conjecture}[theorem]{Conjecture}
\newtheorem{corollary}[theorem]{Corollary}

\newtheorem{lemma}[theorem]{Lemma}

\newtheorem{proposition}[theorem]{Proposition}

\numberwithin{equation}{section}

\begin{document}

\title[Supraposinormality and hyponormality for the . . . .]{Supraposinormality and hyponormality for the \\ generalized Ces\`{a}ro matrices of order two}

\author {H. C. Rhaly Jr.}
\address {1081 Buckley Drive\\Jackson, Mississippi   39206\\ U.S.A.}
\email {rhaly@member.ams.org}

\dedicatory{In memory of Evans Burnham Harrington (1925-1997)}

\subjclass[2010]{Primary 47B20}
\keywords{Ces\`{a}ro matrix, hyponormal operator, supraposinormal operator}

\begin{abstract}
It is well known that the generalized Ces\`{a}ro matrices of order one are hyponormal operators on $\ell^2$, and it has recently been shown that the Ces\`{a}ro matrix of order two is also hyponormal.  Here the relatively new concept of supraposinormality is used to show that the generalized Ces\`{a}ro matrices of order two are both posinormal and coposinormal, and that ``most'' of them are also hyponormal.  A conjecture is propounded that would extend the hyponormality result.
\end{abstract}

\maketitle

\renewcommand{\labelenumi}{(\alph{enumi})}

\section{Introduction}

The concept of supraposinormality for operators on a Hilbert space $H$ was recently introduced and investigated in [\textbf{8}].  The operator $A  \in \mathcal{B}(H)$, the set of bounded linear operators on $H$, is {\it supraposinormal} if there exist positive operators $P$ and $Q \in \mathcal{B}(H)$ such that  \[AQA^*= A^*PA,\] where at least one of $P$, $Q$ has dense range.  The ordered pair $(Q,P)$ is referred to as an \textit{interrupter pair} associated with $A$.  A \textit{posinormal} operator is a particular case of a supraposinormal with $Q=I$, and a \textit{coposinormal} operator is a particular case of a supraposinormal with $P=I$.  The following theorem contains some key facts about supraposinormal operators.

\begin{theorem}  Suppose $A \in \mathcal{B}(H)$ satisfies $AQA^*= A^*PA$ for positive operators $P, Q \in B(H)$. 

\begin{enumerate}

\item If $Q$ has dense range, then $A$ is supraposinormal and $Ker (A) \subseteq Ker (A^*)$, where $Ker (A) = \{f \in H: Af = 0 \}$.    

\item If $P$ has dense range, then $A$ is supraposinormal and $Ker (A^*) \subseteq Ker (A)$.    

\item If $Q$ is invertible, then the supraposinormal operator $A$ is posinormal.  

\item If $P$ is invertible, then the supraposinormal operator $A$ is coposinormal.

\item If $P$ and $Q$ are both invertible, then $A$ is both posinormal and coposinormal with $Ker (A) = Ker (A^*)$ and $Ran (A) = Ran (A^*)$, where $Ran (A) = \{g \in H : g = Af$ for some $f \in H\}$.

\end{enumerate}
\end{theorem}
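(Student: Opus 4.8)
The plan is to handle the five assertions by first disposing of the kernel inclusions in (1) and (2) through a direct positivity computation, and then isolating a single norm identity that will drive the range statements in (3)--(5) via Douglas's range-inclusion and factorization theorem. The recurring algebraic fact I would exploit is that for a positive operator $T$ one has $\langle Tu,u\rangle=\|T^{1/2}u\|^2$, and that a self-adjoint $T\ge 0$ has dense range precisely when $\mathrm{Ker}(T)=\mathrm{Ran}(T)^{\perp}=\{0\}$.

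For (1), supraposinormality is immediate, since dense range of $Q$ is one of the two permitted alternatives in the definition. To obtain $\mathrm{Ker}(A)\subseteq\mathrm{Ker}(A^*)$ I would take $f\in\mathrm{Ker}(A)$; then $A^*PAf=0$, so the identity $AQA^*=A^*PA$ forces $AQA^*f=0$. Pairing with $f$ gives $0=\langle AQA^*f,f\rangle=\|Q^{1/2}A^*f\|^2$, whence $Q^{1/2}A^*f=0$ and therefore $QA^*f=0$, i.e. $A^*f\in\mathrm{Ker}(Q)$. Since $Q$ is positive with dense range, $\mathrm{Ker}(Q)=\{0\}$, so $A^*f=0$. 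Part (2) is the mirror image: beginning with $f\in\mathrm{Ker}(A^*)$ one arrives at $\|P^{1/2}Af\|^2=0$, and dense range of $P$ then yields $Af=0$.

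The engine for (3)--(5) is the identity gotten by pairing $AQA^*=A^*PA$ with an arbitrary $x$: from $\langle AQA^*x,x\rangle=\|Q^{1/2}A^*x\|^2$ and $\langle A^*PAx,x\rangle=\|P^{1/2}Ax\|^2$ I would record that $\|Q^{1/2}A^*x\|=\|P^{1/2}Ax\|$ for all $x\in H$. For (3), invertibility of $Q$ makes $Q^{1/2}$ boundedly invertible, so $\|A^*x\|\le\|Q^{-1/2}\|\,\|Q^{1/2}A^*x\|=\|Q^{-1/2}\|\,\|P^{1/2}Ax\|\le\|Q^{-1/2}\|\,\|P\|^{1/2}\|Ax\|$, which is exactly $AA^*\le\lambda^2A^*A$ with $\lambda=\|Q^{-1/2}\|\,\|P\|^{1/2}$. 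By Douglas's lemma this gives $\mathrm{Ran}(A)\subseteq\mathrm{Ran}(A^*)$ together with a factorization $A=A^*C$ for some bounded $C$; substituting $A=A^*C$ and its adjoint $A^*=C^*A$ into $AA^*$ then produces $AA^*=A^*(CC^*)A$, exhibiting $A$ as posinormal with interrupter $CC^*$. Part (4) is symmetric: invertibility of $P$ gives $A^*A\le\mu^2AA^*$, hence $\mathrm{Ran}(A^*)\subseteq\mathrm{Ran}(A)$ and a factorization $A^*=AD$, from which $A^*A=A(DD^*)A^*$ shows $A$ is coposinormal.

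Finally, (5) is a synthesis: with $P,Q$ both invertible, parts (3) and (4) furnish posinormality and coposinormality at once, and the two inclusions combine to $\mathrm{Ran}(A)=\mathrm{Ran}(A^*)$; since invertible operators have dense range, parts (1) and (2) apply to give both kernel inclusions and hence $\mathrm{Ker}(A)=\mathrm{Ker}(A^*)$. I expect the only genuine obstacle to be the passage from the scalar-type inequality $AA^*\le\lambda^2A^*A$ to an honest interrupter equation: this is precisely where Douglas's factorization theorem is indispensable, and some care is needed to convert the factorization back into the symmetric form $A^*(CC^*)A$ rather than merely asserting the range inclusion. Everything else reduces to routine positivity estimates and adjoint bookkeeping.
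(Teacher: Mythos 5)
Your proposal is correct in all five parts, and every step checks out: the kernel inclusions in (1) and (2) follow from $\langle AQA^*f,f\rangle=\|Q^{1/2}A^*f\|^2$ (respectively $\|P^{1/2}Af\|^2$) together with the fact that a positive operator with dense range has trivial kernel; the norm identity $\|Q^{1/2}A^*x\|=\|P^{1/2}Ax\|$ does yield $AA^*\le\lambda^2A^*A$ when $Q$ is invertible (and $A^*A\le\mu^2AA^*$ when $P$ is); and your insistence on converting the Douglas factorization $A=A^*C$ into the honest interrupter equation $AA^*=A^*(CC^*)A$ is exactly the care that parts (3) and (4) require---quoting the range inclusion alone would not by itself exhibit a positive interrupter. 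One caveat about the comparison you were asked to make: this paper proves nothing here, since its entire proof of Theorem 1.1 is the single line ``See [\textbf{8}],'' so the only benchmark is the cited source rather than an in-paper argument. Your route is essentially the standard one in that literature: the equivalence of posinormality with the inequality $AA^*\le\lambda^2A^*A$, with the range inclusion $\mathrm{Ran}(A)\subseteq\mathrm{Ran}(A^*)$, and with the factorization $A=A^*C$ goes back to [\textbf{6}] and rests precisely on Douglas's theorem, and the supraposinormal statements in [\textbf{8}] are derived from it much as you do. So you have not found a genuinely different path, but you have supplied a complete, self-contained proof of a statement the paper delegates entirely to a reference, which is more than the paper itself offers.
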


\begin{proof}  See [\textbf{8}].
\end{proof}

\pagebreak

Recall that the operator $A \in \mathcal{B}(H)$ is \textit{hyponormal}  if \[< (A^*A - AA^*) f , f > \hspace{2mm}  \geq \hspace{2mm} 0\]  for all $f \in H$.   

\begin{theorem}   If $A$ is supraposinormal operator on $H$ with $AQA^*= A^*PA $ and  

\[Q \geq I \geq P \geq 0,\]   

\noindent then $A$ is hyponormal.
\end{theorem}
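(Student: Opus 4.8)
The plan is to establish hyponormality directly, by proving the operator inequality $A^*A - AA^* \geq 0$, that is $\langle (A^*A - AA^*)f, f\rangle \geq 0$ for every $f \in H$. The single tool I expect to need is the elementary observation that conjugation by $A$ preserves positivity: for any positive operator $B \in \mathcal{B}(H)$ one has $\langle ABA^* f, f\rangle = \langle BA^*f, A^*f\rangle \geq 0$ and $\langle A^*BA f, f\rangle = \langle BAf, Af\rangle \geq 0$, so that both $ABA^*$ and $A^*BA$ are again positive operators.

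First I would exploit the hypothesis $Q \geq I$. Writing $Q - I \geq 0$ and conjugating, $A(Q-I)A^* \geq 0$, which rearranges to $AQA^* \geq AA^*$. Next I would exploit $I \geq P$: from $I - P \geq 0$ and conjugating on the other side, $A^*(I-P)A \geq 0$, that is $A^*A \geq A^*PA$. These two inequalities are then linked by the defining supraposinormality identity $A^*PA = AQA^*$.

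Chaining the three relations gives
\[ A^*A \;\geq\; A^*PA \;=\; AQA^* \;\geq\; AA^*, \]
whence $A^*A - AA^* \geq 0$ and $A$ is hyponormal.

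I do not anticipate a genuine obstacle here; the argument is just a short chain of operator inequalities. The only points requiring a little care are the justification that $B \geq 0$ forces $ABA^* \geq 0$ and $A^*BA \geq 0$ (the tool above), and keeping track of which positivity hypothesis feeds which end of the chain: $Q \geq I$ must bound the $AA^*$ end from above while $I \geq P$ bounds the $A^*A$ end from below, so that the middle equality $A^*PA = AQA^*$ closes the gap in the correct direction.
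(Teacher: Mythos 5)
Your proof is correct and is essentially the paper's own argument in different clothing: the paper inserts the vanishing term $-A^*PA + AQA^*$ into $A^*A - AA^*$ and reads off the sum of the two positive operators $A^*(I-P)A$ and $A(Q-I)A^*$, which is exactly your chain $A^*A \geq A^*PA = AQA^* \geq AA^*$ rearranged. No gaps; both hinge on the same observation that conjugation preserves positivity.
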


\begin{proof}   Assume the hypothesis is satisfied.  Then   \begin{align*}   \langle (A^*A - AA^*) f , f \rangle = \langle (A^*A - A^* P A + A Q A^* - AA^*) f , f \rangle   \end{align*} 
\begin{align*}   = \langle (I-P)Af,Af \rangle + \langle(Q-I)A^*f,A^*f \rangle  \geq 0   \end{align*}  

 \noindent for all f in $H$, so $A$ is hyponormal.
\end{proof}

In a pair of 1921 papers [\textbf{2},\textbf{3}] Hausdorff studied the class of lower triangular matrices whose entries are \begin{equation*} h_{ij} = \binom{i}{j} \Delta^{i-j} \mu_{j}, \textrm{   }  0 \leq j \leq i, \end{equation*}  where $ \{ \mu_n \}$ is a real or complex sequence, and $\Delta$ is the forward difference operator defined by \begin{equation*} \Delta \mu_k = \mu_k - \mu_{k+1}, \textrm{   } \Delta^{n+1}  \mu_k = \Delta(\Delta^n \mu_k ).  \end{equation*}
Almost forty years later, Endl [\textbf{1}] and Jakimovski [\textbf{4}] independently defined a generalization of the Hausdorff matrices as follows.  For each $\alpha \geq 0$, $H^{(\alpha)}$ is a lower triangular matrix with entries \begin{equation} h_{ij}^{(\alpha)} = \binom{i+ \alpha }{i-j} \Delta^{i-j} \mu_{j}, \textrm{   }  0 \leq j \leq i. \end{equation}  The case $\alpha = 0$ results in the original Hausdorff matrices.

If we use \begin{equation*} \mu_j = \int_0^1 t^{j+ \alpha} d \chi (t) \textrm{ \hspace{3mm}   where  \hspace{2mm}  } \chi (t) = 1 - (1-t)^{\beta} \end{equation*} in $(1.1)$ with $\alpha, \beta \geq 0$, we obtain the generalized Ces\`{a}ro matrices of order $\beta$, $(C^{(\alpha)}, \beta)$, given by
\begin{align} (C^{(\alpha)}, \beta)_{ij} = \binom{i+\alpha}{i-j} \int_0^1 t^{j+\alpha}(1-t)^{i-j} \beta (1-t)^{\beta - 1} dt   \end{align}   \begin{align*} = \frac{\Gamma(i+\alpha+1)}{\Gamma(i-j + 1) \Gamma(j + \alpha + 1)} \cdot  \frac{\beta \Gamma(j+\alpha+1) \Gamma(i-j + \beta)} {\Gamma(i + \alpha + \beta + 1).} \end{align*}  

For fixed $\alpha \geq 0$ and $\beta = 1$, one obtains the generalized Ces\`{a}ro matrices of order one, $(C^{(\alpha)},1)$, with entries \begin{equation*} (C^{(\alpha)},1)_{ij} = \left \{ \begin{array}{lll}
\frac{1}{i + 1 + \alpha}& for &  0 \leq j \leq i \\
0 & for & j > i .\end{array}\right. \end{equation*} for all $i$.   We note that $(C^{(\alpha)},1) \in \mathcal{B} (\ell^2)$ for all $\alpha > -1$, not just for $\alpha \geq 0$.  The next result first appeared in [\textbf{7}, Theorem $2.4$].

\begin{proposition}  $(C^{(\alpha)},1)$ is posinormal and coposinormal for all $\alpha > -1$, and $(C^{(\alpha)},1)$ is hyponormal for all $\alpha \geq 0$. 
\end{proposition}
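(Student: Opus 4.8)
The plan is to establish posinormality and coposinormality by exhibiting explicit interrupter operators, and then to verify hyponormality directly via the characterizing inequality. Let me think about the structure of this matrix first.

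The matrix $(C^{(\alpha)},1)$ is lower triangular with the remarkably simple form: every entry in row $i$ (for $j \leq i$) equals $\frac{1}{i+1+\alpha}$. So row $i$ is a constant row. This means $(C^{(\alpha)},1) = D \cdot L$ where $L$ is the lower-triangular matrix of all ones (on and below the diagonal) and $D = \operatorname{diag}(\frac{1}{i+1+\alpha})$ is a diagonal operator.

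Let me sketch each claim.

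**Posinormality and coposinormality.** The plan is to find positive $P$ with $AQA^* = A^*PA$ with $Q=I$ (posinormal) and separately with $P=I$ (coposinormal). Actually, by Theorem 1.1 parts (3) and (4), if I can find a single interrupter pair $(Q,P)$ with both $P$ and $Q$ invertible and positive, I get both posinormality and coposinormality simultaneously (part 5). So the first plan is to hunt for invertible positive operators $P,Q$ satisfying $AQA^* = A^*PA$.

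Given the factored structure $A = DL$, I would compute $AA^*$ and $A^*A$ explicitly. We have $AA^* = DLL^*D$ and $A^*A = L^*D^2 L$. The matrix $LL^*$ has entries $(LL^*)_{ij} = \min(i,j)+1$ (number of common indices), and $L^*L$ has entries $(L^*L)_{ij} = $ number of rows where both columns have a $1$, which is $\#\{k : k \geq \max(i,j)\}$ — infinite, so I should be careful and work with the finite-section or just manipulate formally with convergent sums. The cleaner route: since $A$ is lower triangular with known entries, directly compute $A^*A$ and $AA^*$ as operators on $\ell^2$ and look for a diagonal $P$ (and $Q$) making the supraposinormality equation hold. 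A natural guess is that $P$ and $Q$ can be taken diagonal, reducing the operator equation to a scalar recurrence.

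**Hyponormality for $\alpha \geq 0$.** By Theorem 1.2, it suffices to produce an interrupter pair with $Q \geq I \geq P \geq 0$. So the key step is: find positive operators $P,Q$ with $AQA^* = A^*PA$ and the ordering $Q \geq I \geq P \geq 0$, which then forces hyponormality. Concretely, I would try to take $P$ and $Q$ diagonal, write out the equation $AQA^* = A^*PA$ entrywise, solve for the diagonal entries $q_i$ and $p_i$, and then check the inequalities $q_i \geq 1 \geq p_i \geq 0$ hold precisely when $\alpha \geq 0$.

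\medskip

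The hard part will be the bookkeeping of the infinite sums in $A^*A$ and verifying that the proposed diagonal interrupters actually solve the operator equation on all of $\ell^2$ (not merely on finitely supported sequences), together with checking that $P$ and $Q$ are genuinely positive and invertible. I expect the positivity/invertibility of $P$ and $Q$ for the full range $\alpha > -1$ to be routine once the diagonal entries are in hand, whereas pinning down the exact threshold $\alpha \geq 0$ for the ordering $Q \geq I \geq P$ in the hyponormality claim will require the most careful estimation. Since this Proposition is quoted from [\textbf{7}], I would cite that source for the detailed computation rather than reproduce it, and present here only the interrupter pair together with the verification that it satisfies the hypotheses of Theorems 1.1 and 1.2.
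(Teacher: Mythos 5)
Your strategy coincides exactly with the paper's: produce one diagonal interrupter pair $(Q,P)$, positive and invertible for all $\alpha>-1$ (so that Theorem 1.1 yields posinormality and coposinormality), then check the ordering $Q \geq I \geq P \geq 0$ for $\alpha \geq 0$ and invoke Theorem 1.2 for hyponormality. The gap is that your write-up never gets past the plan: the entire nontrivial content of this proof is the explicit pair, and you only say you \emph{would} solve the entrywise equations for diagonal $p_i, q_i$ and \emph{would} present the result, deferring the computation to [\textbf{7}]. As it stands there is nothing to verify. The missing data are
\begin{equation*}
Q = \operatorname{diag}\{1+\alpha,\, 1,\, 1,\, 1, \ldots\}, \qquad P = \operatorname{diag}\Big\{\tfrac{n+1+\alpha}{n+2+\alpha} : n \geq 0\Big\},
\end{equation*}
and once these are written down everything you flag as ``the hard part'' collapses. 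With $A=(C^{(\alpha)},1)$, the $(i,j)$ entry of $AQA^*$ is the finite sum $\frac{1}{(i+1+\alpha)(j+1+\alpha)}\big[(1+\alpha)+\min(i,j)\big] = \frac{1}{\max(i,j)+1+\alpha}$, while the $(i,j)$ entry of $A^*PA$ is the infinite series
\begin{equation*}
\sum_{k \geq \max(i,j)} \frac{1}{(k+1+\alpha)(k+2+\alpha)} = \frac{1}{\max(i,j)+1+\alpha},
\end{equation*}
which telescopes; so the bookkeeping of infinite sums you worry about amounts to one telescoping series, and your concern about $L^*L$ having infinite entries is moot because the interrupter $P$ restores summability.

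Two further points where your anticipations miss the actual shape of the argument. First, you predict that ``pinning down the exact threshold $\alpha \geq 0$ \ldots will require the most careful estimation''; in fact $p_n = \frac{n+1+\alpha}{n+2+\alpha} \in (0,1)$ for every $\alpha > -1$, so $I \geq P \geq 0$ always holds in the admissible range, and the threshold comes entirely from the single entry $q_0 = 1+\alpha$, which is $\geq 1$ exactly when $\alpha \geq 0$ --- no estimation at all. Second, your fallback of citing [\textbf{7}] for the whole computation is defensible (the paper itself attributes the result to [\textbf{7}, Theorem 2.4]), but the paper still exhibits the pair and reduces the proposition to Theorems 1.1 and 1.2; a proof that presents neither the pair nor the verification is an empty shell. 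Your structural observation $A = DL$ with $D$ diagonal and $L$ the lower-triangular matrix of ones is correct and could organize the computation, but it is not needed once the pair is guessed.
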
 

\begin{proof}    If $Q:\equiv diag \{1 + \alpha, 1, 1, 1, ....\}$ and $P:\equiv diag\{\frac{n+1+ \alpha}{n+2+ \alpha}: n = 0, 1, 2, 3, ....\}$,  it can be verified that \begin{equation*} (C^{(\alpha)},1) Q (C^{(\alpha)},1)^* =   (C^{(\alpha)},1)^* P(C^{(\alpha)},1) \end{equation*}  with both $P$ and $Q$ invertible for all $\alpha > -1$, so it follows from Theorem $1.1$ that $(C^{(\alpha)},1)$ is both posinormal and coposinormal for those values of $\alpha$.  Since $Q \geq I \geq P$ for all $\alpha \geq 0$, it follows from Theorem $1.2$ that $(C^{(\alpha)},1)$ is hyponormal for those $\alpha$.
\end{proof}

\noindent It should be noted that earlier proofs of hyponormality for this operator (see [\textbf{6}, \textbf{10}]) were much more computational, so it seems natural and desirable to look for opportunities to apply the supraposinormality approach to other operators.  With that in mind, we turn to display $(1.2)$ when $\beta = 2$ and obtain \begin{equation*} (C^{(\alpha)},2)_{ij}  = \left \{ \begin{array}{lll}
\frac{2(i + 1 -j )}{(i+ 1 + \alpha)(i+ 2 + \alpha)}& for &  0 \leq j \leq i \\
0 & for & j > i \end{array}\right. , \end{equation*}   the entries of the generalized Ces\`{a}ro matrices of order two.  Note again that these operators are in $\mathcal{B} (\ell^2)$ for all $\alpha > -1$ (not just for $\alpha \geq 0$).  Comparison of the entries verifies that the Ces\`{a}ro matrix $(C^{(0)},2)$ of order two (take $\alpha = 0$) is precisely the matrix studied in [\textbf{9}], where it was shown that, as a result of the relationship  \begin{equation*} (C^{(0)},2)(C^{(0)},2)^*=(C^{(0)},2)^* diag \Big \{\frac{(n+1)(n+2)}{(n+3)(n+4)}: n = 0, 1, 2, 3, .... \Big \} (C^{(0)},2), \end{equation*}  $(C^{(0)},2)$ is a hyponormal operator on $\ell^2$.  In view of the relationship displayed above, together with inspection of the result obtained for $(C^{(\alpha)},1)$ in Proposition $1.3$, we decide to make an adjustment in the interrupter to allow for $\alpha$ and call the result $P$, compute $(C^{(\alpha)},2)^*P(C^{(\alpha)},2)$, and then try to find a diagonal or ``nearly'' diagonal operator $Q$ such that \begin{equation*} (C^{(\alpha)},2)Q(C^{(\alpha)},2)^*= (C^{(\alpha)},2)^*P(C^{(\alpha)},2) \end{equation*}  in the hope that Theorem $1.2$ will apply.  That approach has led to the main results presented in the next section.

Some of the calculations in the following sections have been aided by the Sagemath software system [\textbf{11}].

\section{Main Results}

Suppose  $\alpha > -1$.  Under consideration here will be the matrix $M : \equiv (C^{(\alpha)},2)$ whose entries are given by 
\[m_{ij} = \left \{ \begin{array}{lll}
\frac{2 (i + 1 - j)}{(i+ 1 + \alpha)(i+ 2 + \alpha)}& for &  0 \leq j \leq i \\
0 & for & j > i .\end{array}\right.\]   
\noindent It is worth noting that the range of M contains all the $e_n$'s from the standard orthonormal basis for $\ell^2$ since \begin{equation*} M  \Big [ \frac{1}{2} (n+1+\alpha)(n+2+\alpha)(e_n-2e_{n+1}+e_{n+2}) \Big ] = e_n. \end{equation*}

\begin{theorem}  The operator $M \in \mathcal{B}(\ell^2)$ is supraposinormal for $\alpha > -1$.  \end{theorem}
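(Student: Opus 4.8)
The plan is to produce, explicitly, a pair of positive operators $P$ and $Q$ satisfying the supraposinormality identity $MQM^* = M^*PM$ and then to invoke Theorem $1.1$. Guided by the $\alpha = 0$ relationship recorded above for $(C^{(0)},2)$ (where $Q = I$ and $P = \mathrm{diag}\{(n+1)(n+2)/((n+3)(n+4))\}$), I would first commit to the diagonal interrupter
\[ P := \mathrm{diag}\Big\{ \tfrac{(n+1+\alpha)(n+2+\alpha)}{(n+3+\alpha)(n+4+\alpha)} : n = 0,1,2,\dots \Big\}. \]
For every $\alpha > -1$ all the diagonal entries lie in $(0,1)$ and are bounded below by the $n=0$ entry, so $P$ is a positive, bounded, invertible operator and in particular has dense range. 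By part $(2)$ of Theorem $1.1$ it therefore suffices to exhibit \emph{any} positive $Q \in \mathcal{B}(\ell^2)$ with $MQM^* = M^*PM$; the dense range of $P$ then delivers supraposinormality at one stroke (and is consistent with the observation, already noted, that $\mathrm{Ran}(M)$ contains every $e_n$, so that $\mathrm{Ker}(M^*) = \{0\}$).

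The reason for this choice of $P$ is that it is engineered to simplify $M^*PM$: writing $p_i$ for the $i$-th diagonal entry of $P$, one has the collapse
\[ p_i\, m_{ij} = \frac{2(i+1-j)}{(i+1+\alpha)(i+2+\alpha)}\cdot \frac{(i+1+\alpha)(i+2+\alpha)}{(i+3+\alpha)(i+4+\alpha)} = \frac{2(i+1-j)}{(i+3+\alpha)(i+4+\alpha)}, \]
a clean shift of the denominator index by two. I would then compute the generic entry $(M^*PM)_{jk} = \sum_{i\ge \max(j,k)} (p_i m_{ij})\, m_{ik}$ in closed form by partial fractions and telescoping, with the Sagemath assistance the paper already relies on. With $M^*PM$ in hand, I would posit $Q$ to be diagonal or tridiagonal --- ``nearly'' diagonal, as anticipated in the introduction, and reducing to $I$ when $\alpha = 0$ --- compute $(MQM^*)_{jk}$ as a comparable double sum, and match the two families of entries. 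Since $M$ is injective with dense range, such a $Q$ is unique if it exists, so this matching is an overdetermined but consistent linear system that I expect to pin down the few bands of $Q$ explicitly.

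The routine part is then the identity itself: once the closed forms are available, $MQM^* = M^*PM$ is verified entrywise. The genuine obstacle is the \emph{positivity} of the constructed $Q$. Because $Q$ deviates from the identity precisely by the $\alpha$-dependent correction that vanishes at $\alpha = 0$, one must show that this correction does not destroy positivity for any $\alpha > -1$. I would handle this by exhibiting a factorization $Q = R^*R$, or, if $Q$ is tridiagonal, by verifying positivity through diagonal dominance or a Cholesky-type decomposition, together with a bound showing $Q \in \mathcal{B}(\ell^2)$. Once $Q \ge 0$ is secured, Theorem $1.1(2)$ immediately yields that $M$ is supraposinormal for all $\alpha > -1$. (The finer question of whether $Q \ge I \ge P$, needed for hyponormality via Theorem $1.2$, is exactly where ``most'' rather than all values of $\alpha$ will enter, but it plays no role in the present statement.)
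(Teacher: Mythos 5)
Your plan is the paper's proof in outline: the same interrupter $P = \mathrm{diag}\{(n+1+\alpha)(n+2+\alpha)/((n+3+\alpha)(n+4+\alpha))\}$, the same telescoping evaluation of $M^*PM$ in closed form, the same ansatz that $Q$ should be a ``nearly diagonal'' perturbation of $I$ reducing to $I$ at $\alpha=0$, and the same appeal to Theorem $1.1$ via the dense range (indeed invertibility) of $P$. The problem is that, as written, it is a proof scheme rather than a proof. The theorem is an existence statement, and the witness $Q$ is never produced: you assert that matching the entries of $MQM^*$ against the closed form of $M^*PM$ yields a consistent system within your banded ansatz, and that the resulting $Q$ is bounded and positive --- but those two claims \emph{are} the content of the theorem, and nothing in the proposal rules out the ansatz simply failing (a priori the unique $Q$, which your correct injectivity/dense-range remark does pin down, could have infinitely many nonzero bands, or fail positivity on some subinterval of $(-1,\infty)$). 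Deferring exactly these steps to future computation leaves the argument with a genuine gap.

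For the record, the gap does close, and along the lines you predict. The paper's $Q$ lies within your ansatz: it is the identity except in the upper-left $2\times 2$ corner, with $q_{00} = \frac{(3+2\alpha)(1+\alpha)(2+\alpha)}{6}$, $q_{01}=q_{10} = -\frac{\alpha(1+\alpha)(2+\alpha)}{3}$, $q_{11} = \frac{(3-\alpha+2\alpha^2)(2+\alpha)}{6}$, and the identity $MQM^* = M^*PM$ is verified entrywise by the very telescoping device you describe, both sides having $(i,j)$-entry $\frac{2(3j+3-i+2\alpha)}{3(j+1+\alpha)(j+2+\alpha)}$ for $j \geq i \geq 2$. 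Notably, the positivity obstacle you flag as the ``genuine'' difficulty is in fact the easiest part once $Q$ is in hand: boundedness is trivial (a rank-two perturbation of $I$), and positivity reduces to the corner block, whose $(0,0)$ entry is positive and whose determinant is $\frac{(1+\alpha)(2+\alpha)^2(3+\alpha)}{12} > 0$ for all $\alpha > -1$, so $Q$ is even invertible (which the paper then exploits for Corollary $2.2$). So: right strategy, correctly motivated, same route as the author's --- but the proposal stops precisely where the actual work of the proof begins.
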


\begin{proof} In view of the considerations mentioned in the introduction, we first take  \begin{equation*} P :\equiv diag \Big \{ \frac{(n + 1 + \alpha)(n + 2 + \alpha)}{(n + 3 + \alpha)(n + 4 + \alpha)} : n \geq 0 \Big \}, \end{equation*} and then compute  $M^*PM$.  For $j \geq i$, the ($i$, $j$)-entry of $M^*PM$ is \begin{equation*} \sum_{k=0}^{\infty} \frac{4(j + 1 - i + k)(k + 1)}{(j + 1+ k + \alpha)(j + 2 + k + \alpha)(j + 3 + k + \alpha)(j + 4 + k + \alpha)}. \end{equation*}

\noindent The series is telescoping, as can be seen by rewriting the summand as \begin{equation*} s(k) - s(k+1) \end{equation*} where  \begin{equation*} s(k) :\equiv   \frac{ak^2+bk+c}{(j + 1+ k +  \alpha)(j + 2+ k + \alpha)(j + 3+ k + \alpha)}  \end{equation*}
with \begin{equation*}  a=4, b = 2(3j + 5 -i + 2 \alpha), \textrm{ and } c = \frac{2}{3} (3j + 3 - i + 2 \alpha) (j + 3 + \alpha). \end{equation*}
Consequently, for $j \geq i$, the ($i$, $j$)-entry of $M^*PM$ in simplified form is \begin{equation*} s(0) = \frac{2(3j + 3 - i + 2 \alpha)}{ 3  (j + 1+ \alpha)(j + 2 + \alpha)} .\end{equation*}  The computations for $i > j$ are similar.

Suppose that $Q = [q_{ij}]$ is given by 

\begin{equation*} Q = \left(\begin{array}{ccccccc} \frac{(3+2\alpha)(1+\alpha)(2+\alpha)}{6} &- \frac{\alpha(1 + \alpha) (2 + \alpha)}{3} &0&0&0&\ldots\\ - \frac{\alpha(1 + \alpha) (2 + \alpha)}{3} & \frac{(3- \alpha+2 \alpha^2) (2 + \alpha)}{6} &0&0&0&\ldots\\0&0&1&0&0 & \ldots\\0&0&0&1& 0 &\ldots\\0&0&0&0&1 &\ldots\\\vdots&\vdots&\vdots&\vdots&\vdots&\ddots \end{array}\right);  \hspace{2mm} \textrm{that is,} \end{equation*}   $q_{00} =  \frac{(3+2\alpha)(1+\alpha)(2+\alpha)}{6}$, $q_{10}=q_{01} = - \frac{\alpha(1 + \alpha) (2 + \alpha)}{3}$,  $q_{11} =\frac{(3- \alpha+2 \alpha^2) (2 + \alpha)}{6}$,    $q_{ii} =1$ for all $i=j \geq 2$, and $q_{ij} = 0$ otherwise. If $X :\equiv QM^*$, then $X = [x_{ij}]$ has entries

\begin{equation*} x_{ij} = \left \{ \begin{array}{lll}
\frac{[3(j+1)+2 \alpha](1 + \alpha)(2 + \alpha )}{3(j+1 + \alpha )(j +2 + \alpha)} & for &  i=0 \\
\frac{[3j (1- \alpha) - 2 \alpha (1 + \alpha)] (2 + \alpha)}{3( j+1 + \alpha)(j +2 + \alpha)} & for &  i=1 \\
\frac{2 (j+1-i)}{(j+1+\alpha)(j+2+\alpha)} & for & j \geq i \geq 2 \\
0 & for & i \geq 2, j < i.  \end{array}\right. \end{equation*}    \vspace{1mm}

\noindent For $j \geq i \geq 2$, the ($i$,$j$)-entry of $MQM^*=MX$ is \begin{align*} \frac{2(i+1)}{(i+1+\alpha)(i+2+\alpha)} \cdot \frac{(3j+3+2\alpha)(1+\alpha)(2+\alpha)}{3(j+1+\alpha)(j+2+\alpha)} \end{align*}   \begin{align*}  +  \frac{2i}{(i+1+\alpha)(i+2+\alpha)} \cdot \frac{(3j-3j\alpha-2\alpha-2\alpha^2)(2+\alpha)}{3(j+1+\alpha)(j+2+\alpha)}    \end{align*}    \begin{align*}  + \sum_{k=1}^{i-1} \frac{2(i-k)}{(i+1+\alpha)(i+2+\alpha)} \cdot \frac{2(j-k)}{(j+1+\alpha)(j+2+\alpha)}   \end{align*}  \begin{align*} = \frac{2(3j + 3 - i + 2 \alpha)}{ 3  (j + 1+ \alpha)(j + 2 + \alpha)} .\end{align*} The cases $j \geq i = 0$ and $j \geq i = 1$ are left to the reader. The computations for $i \geq j$ are similar, so it follows that $M^*PM = MQM^*$.  It is clear that $P$ has dense range, so $M$ is supraposinormal.  \end{proof}

\begin{corollary}  The operator $M$ is both posinormal and coposinormal for \mbox{$\alpha > -1$.} \end{corollary}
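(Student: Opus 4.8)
The plan is to read off posinormality and coposinormality directly from the supraposinormal identity $M^*PM = MQM^*$ established in Theorem $2.1$, by checking that each interrupter operator is not merely positive with dense range but in fact \emph{invertible}, and then invoking parts (3) and (4) of Theorem $1.1$.

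Coposinormality is the easy half. Since $P$ is the diagonal operator with entries $p_n = \frac{(n+1+\alpha)(n+2+\alpha)}{(n+3+\alpha)(n+4+\alpha)}$, I would first note that for $\alpha > -1$ each of the four factors occurring in $p_n$ is strictly positive, so $p_n > 0$ for every $n$; moreover $p_n \to 1$ as $n \to \infty$, so the sequence $\{p_n\}$ is bounded below by a positive constant. Hence $P$ is a positive, boundedly invertible diagonal operator, and Theorem $1.1$(4) gives that $M$ is coposinormal.

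For posinormality I would verify that $Q$ is positive and invertible. Because $Q$ agrees with the identity outside the upper-left $2\times 2$ block, it suffices to examine that block $B$ with entries $q_{00}$, $q_{01}=q_{10}$, and $q_{11}$. I would check that $q_{00}>0$ (immediate for $\alpha>-1$) and compute the determinant, which after simplification is
\begin{equation*}
\det B = q_{00}q_{11} - q_{01}^2 = \frac{(1+\alpha)(2+\alpha)^2(3+\alpha)}{12}.
\end{equation*}
This is strictly positive for $\alpha>-1$, so $B$ is positive definite; since the remaining diagonal entries of $Q$ all equal $1$, the full operator $Q$ is positive and boundedly invertible. Theorem $1.1$(3) then yields that $M$ is posinormal, completing the proof.

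The main obstacle is the determinant computation for $B$: one must expand $q_{00}q_{11}-q_{01}^2$, pull out the common factor $(1+\alpha)(2+\alpha)^2/36$, and verify that the remaining bracket $(3+2\alpha)(3-\alpha+2\alpha^2)-4\alpha^2(1+\alpha)$ collapses to $3(3+\alpha)$, whence the sign of $\det B$ is transparently positive on the entire range $\alpha>-1$. Everything else---the positivity of the $p_n$, the positivity of $q_{00}$, and the fact that the perturbation of the identity is confined to a single $2\times 2$ corner---is routine.
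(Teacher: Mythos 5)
Your proposal is correct and follows essentially the same route as the paper: both establish invertibility of the interrupters and invoke Theorem $1.1$, with the key computation being the determinant $\frac{(1+\alpha)(2+\alpha)^2(3+\alpha)}{12}$ of the $2\times 2$ corner block of $Q$, which matches the paper exactly. Your additional details---the explicit lower bound on the diagonal entries of $P$ via $p_n \to 1$, and the Sylvester-type check that the corner block is positive definite (not merely invertible)---are sound elaborations of steps the paper treats as ``clear.''
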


\begin{proof} Since \begin{equation*} \det \left(\begin{array}{cc} \frac{(3+2\alpha)(1+\alpha)(2+\alpha)}{6} &- \frac{\alpha(1 + \alpha) (2 + \alpha)}{3} \\ - \frac{\alpha(1 + \alpha) (2 + \alpha)}{3} & \frac{(3- \alpha+2 \alpha^2) (2 + \alpha)}{6} \end{array}\right) = \frac{(1+\alpha)(2+\alpha)^2 (3+\alpha)}{12}, \end{equation*} it is clear that $Q$ is invertible for $\alpha > -1$.  Since $P$ is also clearly invertible for $\alpha > -1$, it follows from Theorem $1.1$ that $M$ is both posinormal and coposinormal when $\alpha > -1$.  \end{proof}

\begin{corollary}  If $n$ is a positive integer and  $\alpha > -1$, then $M^n$ is both posinormal and coposinormal.
\end{corollary}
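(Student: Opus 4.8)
The plan is to stay within the framework of the paper and, for each positive integer $n$, produce positive \emph{invertible} operators $P_n,Q_n$ satisfying
\[ M^n Q_n (M^*)^n = (M^*)^n P_n M^n, \]
so that Theorem 1.1(5) delivers both posinormality and coposinormality of $M^n$ in one stroke, exactly as Corollary 2.2 did for $M$. The case $n=1$ is Corollary 2.2, with the invertible $P,Q$ of Theorem 2.1, so the whole problem is to propagate this single relation to the powers.

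My first attempt would be the most economical one: keep the same interrupters, $P_n=P$ and $Q_n=Q$, and simply sandwich the base identity $M^*PM=MQM^*$. Conjugating it once on each side yields
\[ M^2 Q (M^*)^2 = (MM^*)\,P\,(MM^*), \qquad (M^*)^2 P M^2 = (M^*M)\,Q\,(M^*M), \]
and after $n-1$ such substitutions one gets $M^nQ(M^*)^n$ and $(M^*)^nPM^n$ written as products sandwiched by alternating copies of $MM^*$ and $M^*M$. For $n=2$ the desired relation $M^2Q(M^*)^2=(M^*)^2PM^2$ thus reduces to the single identity $(MM^*)P(MM^*)=(M^*M)Q(M^*M)$, which I would try to establish by writing out the $(i,j)$ entries of both sides from the explicit formulas for $M$, $P$, and $Q$ and checking that the resulting series telescope, precisely as in the proof of Theorem 2.1 (with Sagemath handling the bookkeeping).

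An equivalent and perhaps more transparent route uses the range description of the two properties: $M^n$ is posinormal iff $\mathrm{Ran}(M^n)\subseteq\mathrm{Ran}((M^*)^n)$ and coposinormal iff the reverse inclusion holds. By Corollary 2.2 and Theorem 1.1(5) we already have $\mathrm{Ran}(M)=\mathrm{Ran}(M^*)$ and $\mathrm{Ker}(M)=\mathrm{Ker}(M^*)$; feeding $\mathrm{Ran}(M)=\mathrm{Ran}(M^*)$ into $\mathrm{Ran}(M^2)=M(\mathrm{Ran}(M))$ gives $\mathrm{Ran}(M^2)=\mathrm{Ran}(MM^*)$ and likewise $\mathrm{Ran}((M^*)^2)=\mathrm{Ran}(M^*M)$, so the corollary reduces to the exact range equalities $\mathrm{Ran}(M^n)=\mathrm{Ran}((M^*)^n)$, to be proved by induction. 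The density half is cheap and uses the structure already recorded in the text: since $e_k=My_k$ with $y_k=\tfrac12(k+1+\alpha)(k+2+\alpha)(e_k-2e_{k+1}+e_{k+2})$ lying in $\mathrm{Ran}(M^{n-1})$ by induction, every $e_k$ lies in $\mathrm{Ran}(M^n)$, so $\mathrm{Ran}(M^n)$ is dense, and the same is true of $(M^*)^n$ because $M$ (hence $M^*$) is injective with dense range.

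The hard part is the passage from dense and comparable ranges to the \emph{exact} equality $\mathrm{Ran}(M^n)=\mathrm{Ran}((M^*)^n)$. Corollary 2.2 yields only the two-sided comparability $c\,M^*M\le MM^*\le C\,M^*M$, and in infinite dimensions this controls $\mathrm{Ran}(|M|)=\mathrm{Ran}(|M^*|)$ (that is, $\mathrm{Ran}(M)=\mathrm{Ran}(M^*)$) but does not by itself force $\mathrm{Ran}(MM^*)=\mathrm{Ran}(M^*M)$, since $M$ does not have closed range and squaring destroys operator inequalities. In the factorization $M=M^*C$ supplied by Douglas's theorem the missing step is exactly the invariance $C\,\mathrm{Ran}(M^*)\subseteq\mathrm{Ran}(M^*)$, which is the same information needed to keep the sandwiched $MM^*$/$M^*M$ identity valid with invertible interrupters. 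I expect this exactness/invariance point to be the main obstacle, and to be resolved only by invoking the explicit arithmetic of $(C^{(\alpha)},2)$ — the very telescoping that powered Theorem 2.1 — which is the common computational core of both approaches above.
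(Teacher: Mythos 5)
Your proposal is not a proof: both of your routes terminate exactly at the step that carries all the content, and you say so yourself. In the sandwich route, the reduction of $M^2Q(M^*)^2=(M^*)^2PM^2$ to the single identity $(MM^*)P(MM^*)=(M^*M)Q(M^*M)$ is algebraically correct, but that identity is strictly stronger than the base relation $M^*PM=MQM^*$ and is left entirely unverified; there is no a priori reason the same pair $(Q,P)$ should serve, and even if it did for $n=2$, the ``after $n-1$ such substitutions'' scheme does not close into an induction, since $M^nQ(M^*)^n=M^{n-1}(M^*PM)(M^*)^{n-1}$ cannot be reduced further without a fresh unproven identity at every level. In the range route, your reduction of the claim to the exact equality $\mathrm{Ran}(M^n)=\mathrm{Ran}((M^*)^n)$ is sound (posinormality of $A$ is indeed equivalent to $\mathrm{Ran}(A)\subseteq\mathrm{Ran}(A^*)$, and the density half via $e_k\in\mathrm{Ran}(M^n)$ is fine), and your diagnosis is also correct: the two-sided comparability $c\,M^*M\le MM^*\le C\,M^*M$ coming from invertible interrupters does not survive squaring, so it cannot by itself give $\mathrm{Ran}(MM^*)=\mathrm{Ran}(M^*M)$. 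But correctly locating the obstacle is where your argument stops; you defer its resolution to ``the explicit arithmetic of $(C^{(\alpha)},2)$,'' i.e., to telescoping computations you never perform. As written, this is a research plan with the crucial lemma missing, not a proof.

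The paper closes this gap in one line, by citation: Corollary $2.2$ gives that $M$ is posinormal and coposinormal (with invertible interrupters, via Theorem $1.1$), and the result of Kubrusly, Vieira and Zanni on powers of posinormal operators, [\textbf{5}, Corollary $2.2$ (b)], then delivers posinormality and coposinormality of $M^n$ for every positive integer $n$. That cited result is precisely the bridge your two attempts lack --- the passage from the single-power data to all powers --- and it is an operator-theoretic fact, not a consequence of the entrywise arithmetic of $(C^{(\alpha)},2)$. So the repair is either to invoke [\textbf{5}] as the paper does, or to actually prove the invariance/exact-range statement you isolated; your own remark about squaring operator inequalities shows the latter cannot be extracted from the interrupter inequalities alone and would require genuinely new input.
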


\begin{proof}  This follows from Corollary $2.2$ and [\textbf{5}, Corollary $2.2$ (b)].
\end{proof}

\begin{corollary}  The operator $M \in B(\ell^2)$ is hyponormal for $\alpha \in \{0\} \cup [1, \infty)$.
\end{corollary}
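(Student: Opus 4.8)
The plan is to apply Theorem 1.2 directly to the interrupter pair $(Q,P)$ constructed in Theorem 2.1. That theorem already supplies the identity $MQM^* = M^*PM$, so the entire task reduces to verifying the inequality chain $Q \geq I \geq P \geq 0$; hyponormality of $M$ then follows at once.

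First I would dispose of the condition $I \geq P \geq 0$. Since $P$ is diagonal with entries $\frac{(n+1+\alpha)(n+2+\alpha)}{(n+3+\alpha)(n+4+\alpha)}$, each entry is strictly positive for $\alpha > -1$ and is bounded above by $1$ because $(n+1+\alpha)(n+2+\alpha) \leq (n+3+\alpha)(n+4+\alpha)$. Hence $0 \leq P \leq I$ for \emph{every} $\alpha > -1$, and this part of the hypothesis imposes no constraint.

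The real content is the condition $Q \geq I$, i.e. $Q - I \geq 0$. The structure of $Q$ is decisive here: because $q_{ii} = 1$ for all $i \geq 2$ and all off-diagonal entries vanish outside the top-left corner, $Q - I$ acts as the zero operator on the orthogonal complement of $\mathrm{span}\{e_0, e_1\}$ and as a single $2 \times 2$ symmetric block on $\mathrm{span}\{e_0, e_1\}$, with diagonal entries $q_{00} - 1$, $q_{11} - 1$ and off-diagonal entry $q_{01}$. Positive semidefiniteness of $Q - I$ is therefore equivalent to the three scalar inequalities $q_{00} - 1 \geq 0$, $q_{11} - 1 \geq 0$, and $(q_{00}-1)(q_{11}-1) - q_{01}^2 \geq 0$. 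I would first record that $q_{00} - 1 = \frac{\alpha(2\alpha^2 + 9\alpha + 13)}{6}$ and $q_{11} - 1 = \frac{\alpha(\alpha+1)(2\alpha+1)}{6}$; since the quadratic $2\alpha^2 + 9\alpha + 13$ has negative discriminant, both of these are nonnegative for every $\alpha \geq 0$, so the two diagonal conditions never obstruct hyponormality on the stated range.

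The main obstacle, and the step that isolates the exact range of $\alpha$, is the determinant inequality. Here I expect the algebra to collapse dramatically: after factoring out $\frac{\alpha^2(1+\alpha)}{36}$, the remaining bracket should reduce to the linear polynomial $3(\alpha - 1)$, so that $(q_{00}-1)(q_{11}-1) - q_{01}^2 = \frac{\alpha^2(\alpha^2 - 1)}{12}$. This quantity is nonnegative precisely when $\alpha^2 \geq 1$, that is (given $\alpha > -1$) when $\alpha = 0$ or $\alpha \geq 1$; note that at $\alpha = 0$ the block degenerates to $Q = I$, recovering the relation of [\textbf{9}]. Combining the three conditions shows $Q - I \geq 0$ exactly for $\alpha \in \{0\} \cup [1, \infty)$. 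For these values the full chain $Q \geq I \geq P \geq 0$ holds, and Theorem 1.2 delivers the hyponormality of $M$.
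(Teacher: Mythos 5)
Your proposal is correct and takes essentially the same approach as the paper: both apply Theorem 1.2 to the pair $(Q,P)$ from Theorem 2.1, observe $I \geq P \geq 0$ directly from the diagonal entries, and reduce $Q \geq I$ to the top-left $2 \times 2$ block of $Q-I$, your computed quantities $\frac{\alpha(2\alpha^2+9\alpha+13)}{6}$ and $\frac{\alpha^2(\alpha^2-1)}{12}$ matching the paper's $\det(Z_0)$ and $\det(Z_1)$ exactly. Your explicit verification of $q_{11}-1 \geq 0$ alongside the two leading minors is a harmless (indeed slightly more careful) variant of the paper's finite-section argument.
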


\begin{proof}  It is clear that $I \geq P$ for all $\alpha \geq 0$.  If $Z_0$ is the first finite section of $Q-I$ and $Z_1$ is the second, then \begin{equation*} \det (Z_0) = \frac{\alpha (13 + 9 \alpha + 2 \alpha^2)}{6} \end{equation*} and  \begin{equation*} \det (Z_1) = \frac{ \alpha^2 (\alpha -1)(1+\alpha)}{12}, \end{equation*}  so $Q \geq I$ for $\alpha \geq 1$ or $\alpha = 0$.  It follows from Theorem $1.2$ that  $M$ is hyponormal for $\alpha \in \{0\} \cup [1, \infty)$.  \end{proof}

While it seems plausible that $M$ is also hyponormal for $0 < \alpha < 1$, that question remains open for now.  The next section propounds a conjecture regarding that case.

\section{Conjecture concerning the exceptional case: $0 < \alpha <1$}

This lemma is included to provide support for the conjecture to follow.   Whereas the computations of the previous section centered on the supraposinormality of $M$, the computations in this section center on its posinormality.  

\begin{lemma}  If $M$ is the generalized Ces\`{a}ro matrix of order $2$ for fixed $\alpha > -1$ and $B :\equiv [b_{ij}]_{i,j \geq 0}$ is the matrix defined by \[b_{ij} = \left \{ \begin{array}{lll}
\frac{2(j+1-3i - 2 \alpha)}{( j+3+ \alpha )( j+4+\alpha)} & if  &  j > i-2 \\
\frac{( j+1+ \alpha )( j+2+\alpha)}{( j+3+ \alpha )( j+4+\alpha)} & if  &  j = i-2 \\
0 & if & j < i-2 \end{array}\right. ,\] then 
\begin{enumerate} 
\item $B$ is a bounded linear operator on $\ell^2$, 
\item $BM = M^*$, and
\item the interrupter $P = B^*B = [p_{ij}]$ has entries given by  \begin{equation*}  \label{eq:display} p_{ij} = \left \{ \begin{array}{lll}
\frac{i^4+2(5+2 \alpha) i^3+(35+ 26 \alpha +6 \alpha^2)i^2}{( i +3 + \alpha)^2 ( i + 4 + \alpha)^2}   +    &       &     \\

\hspace{5mm}  \frac{(50+50 \alpha +34 \alpha^2 + 4 \alpha^3)i +\alpha^4+6 \alpha^3 +45 \alpha^2 +28 \alpha +24}{( i +3 + \alpha)^2 ( i + 4 + \alpha)^2}       &   if      & i=j;\\

-\frac{2 \alpha (11 + (5-\alpha)(i+j) + 2 i j - 5 \alpha + 2 \alpha^2)}{( i+3+ \alpha )( i+4+\alpha)( j+3+ \alpha )( j+4+\alpha)}     &   if    &    i \ne j. \end{array}\right. \end{equation*}
 
 \end{enumerate}
\end{lemma}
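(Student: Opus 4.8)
The plan is to verify the three claims in order, since each builds naturally on its predecessor. For part (b), the heart of the matter, I would set up the matrix product $BM = M^*$ entrywise and confirm the identity $\sum_{k} b_{ik} m_{kj} = m_{ji}$ for all $i,j \geq 0$. Because $M$ is lower triangular with $m_{kj} = \frac{2(k+1-j)}{(k+1+\alpha)(k+2+\alpha)}$ for $k \geq j$ and $B$ has the three-case structure with nonzero entries only for $k \geq i - 2$, the sum over $k$ runs from $k = \max(i-2, j)$ upward. I would split into the regimes dictated by how $i-2$ and $j$ compare, and in each case rewrite the summand so that the series telescopes — exactly as the author handled $M^*PM$ in the previous section. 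The target $m_{ji} = \frac{2(j+1-i)}{(j+1+\alpha)(j+2+\alpha)}$ for $j \geq i$ (and $0$ for $j < i$) is what the telescoping sum should collapse to; the specific numerator $2(j+1-3i-2\alpha)$ in the definition of $b_{ij}$ is presumably reverse-engineered precisely so that this collapse produces the right answer, so the algebra, while tedious, should be mechanical once the telescoping representation is found.

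For part (a), boundedness, I would argue that $BM = M^*$ together with $M \in \mathcal{B}(\ell^2)$ does \emph{not} immediately give boundedness of $B$ (since $M$ is not invertible as a bounded operator), so $B$ must be bounded on its own merits. The cleanest route is to exhibit $B$ as a sum of a bounded diagonal-type piece and a bounded correction. Inspecting $b_{ij}$, the entries decay like $1/j$ along rows and the off-diagonal mass is controlled by the factors $(j+3+\alpha)^{-1}(j+4+\alpha)^{-1}$; I would try to dominate $B$ by a Hilbert-matrix-type or Schur-test bound, producing explicit sequences $p_i, q_j > 0$ with $\sum_k |b_{ik}| q_k \leq C\, p_i$ and $\sum_k |b_{kj}| p_k \leq C\, q_j$. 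Alternatively, since $B$ differs from a nicely structured operator only through the $\alpha$-dependent terms, one may write $B = D + R$ where $D$ captures the $j = i-2$ band and $R$ the remaining triangular tail, and bound each summand separately.

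Part (c) is then a direct computation: once (b) and (a) are established, $P := B^*B$ is automatically a positive bounded operator, and I would compute its entries $p_{ij} = \sum_k \overline{b_{ki}}\, b_{kj} = \sum_k b_{ki} b_{kj}$ (the entries being real). Here the sum again runs over $k$ with $b_{ki}, b_{kj}$ both nonzero, so over $k \geq \max(i,j) - 2$, and I expect another telescoping or partial-fraction evaluation to yield the closed forms stated, with the diagonal case $i = j$ and off-diagonal case $i \neq j$ handled separately. I would lean on the Sagemath assistance the author mentions to confirm the quartic-in-$i$ numerator on the diagonal.

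The main obstacle will be part (a): establishing boundedness of $B$ independently, since the telescoping machinery that makes (b) and (c) routine says nothing about norms. The Schur test with a well-chosen weight is the most promising tool, but identifying weights that simultaneously control the slowly-decaying off-diagonal tail and the $j = i-2$ band — especially uniformly across $\alpha > -1$, where the sign and size of the $\alpha$-dependent numerators shift — is where the real work lies.
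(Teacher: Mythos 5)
Your overall plan for parts (a) and (b) matches the paper's proof in substance. Part (b) is carried out there exactly as you describe: entrywise, with the summand written as $s(k)-s(k+1)$ for an explicit $s(k)$ (a quadratic in $k$ over a cubic), handled separately for $j \geq i$ and $j < i$, collapsing to $\frac{2(j+1-i)}{(j+1+\alpha)(j+2+\alpha)}$ and to $0$ respectively. Part (a) is also done essentially along the lines you gesture at: the paper writes $B = T - W_1 + UW_2$, where $T$ is the upper-triangular part (bounded because its entries lie in $[-4/(j+4+\alpha),\,2/(j+4+\alpha)]$ for $i \leq j$, so it is dominated by a bounded Ces\`{a}ro-adjoint-type matrix; your Schur-test alternative would serve the same purpose), $W_1$ is a bounded weighted shift carrying the band $j=i-1$, and $UW_2$ carries the band $j=i-2$. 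So your $B = D + R$ decomposition works once you split the $j = i-1$ band off from the ``tail'' as well. Two of your worries are misplaced: part (a) is a few lines, not the main obstacle, and no uniformity in $\alpha$ is required, since $\alpha > -1$ is fixed throughout the lemma.

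The one genuine error is in your setup of part (c), and as stated the step would fail. Since $b_{kj} = 0$ whenever $j < k-2$, i.e.\ whenever $k > j+2$, each \emph{column} of $B$ has only finitely many nonzero entries: $B$ is upper triangular plus two bands below the diagonal. Hence $p_{ij} = \sum_k b_{ki}b_{kj}$ is a \emph{finite} sum over $0 \leq k \leq \min(i,j)+2$, not a series over $k \geq \max(i,j)-2$ as you claim. Indeed, if you summed the generic formula $\frac{2(i+1-3k-2\alpha)}{(i+3+\alpha)(i+4+\alpha)}$ upward in $k$, the terms grow linearly in $k$ and the series diverges; no telescoping or partial-fraction evaluation exists or is needed. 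The correct computation, which is what the paper does, is the finite sum $\sum_{k=0}^{\min(i,j)+1}$ of products of generic entries plus a single band term at $k = \min(i,j)+2$ (for the diagonal case, $\sum_{k=0}^{i+1}\frac{4(i+1-3k-2\alpha)^2}{(i+3+\alpha)^2(i+4+\alpha)^2} + \frac{(i+1+\alpha)^2(i+2+\alpha)^2}{(i+3+\alpha)^2(i+4+\alpha)^2}$); and since the denominators $(i+3+\alpha)(i+4+\alpha)$ and $(j+3+\alpha)(j+4+\alpha)$ are constant along columns, the whole thing reduces to power sums of $k$ up to $k^2$, which yield the stated quartic numerator on the diagonal and the off-diagonal closed form directly. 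Once you flip the support condition from rows to columns, the remainder of your plan goes through.
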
 

\begin{proof}  \begin{enumerate} \item First note that for $i \leq j$, $b_{ij} \in [-4/( j + 4 + \alpha), 2/( j + 4 + \alpha)]$ for all $i$, so the upper triangular part $T$ of matrix $B$ is a bounded operator on $\ell^2$.  If $U$ denotes the unilateral shift, $W_1$ is the weighted shift with weight sequence   \begin{equation*} \Big \{ \frac{4(n+1+\alpha)}{(n+3+\alpha)(n+4+\alpha)} : n \geq 0  \Big \}  \end{equation*} and $W_2$ is the weighted shift with weight sequence \begin{equation*} \Big \{ \frac{(n+1+\alpha)(n+2+\alpha)}{(n+3+\alpha)(n+4+\alpha)} : n \geq 0 \Big \}, \end{equation*} then \[B = T - W_1 +U W_2,\] so $B \in  \mathcal{B}(\ell^2)$.

\item   For $j \geq i$, the $(i,j)$-entry in the product matrix $BM$ is given by  \begin{equation*}  \sum_{k=0}^{\infty} \frac{4(k+1)(j+1-3i+k- 2 \alpha)}{( j + 1 + k + \alpha)( j + 2 + k + \alpha)( j + 3 + k + \alpha)( j + 4 + k + \alpha)}. \end{equation*}   The summand in the series can be expressed as \begin{equation*} s(k) - s(k+1) \end{equation*}  where  \begin{equation*} s(k) = \frac{4k^2+(10+ 6j-6i)k+2(j+1-i)(j+3+\alpha)}{( j + 1 + k + \alpha)( j + 2 + k + \alpha)( j + 3 + k + \alpha)}. \end{equation*}  The series is telescoping and converges to \begin{equation*} s(0) = \frac{2(j+1-i)}{( j + 1 + \alpha)(j + 2 + \alpha)}, \end{equation*}  which is exactly the $(i,j)$-entry of $M^*$ for $j \geq i$.

\noindent

\noindent For $j < i$, the $(i,j)$-entry of $BM$ is given by \begin{equation*} \frac{(i -1 + \alpha)(i + \alpha)}{(i +1 + \alpha)( i + 2 + \alpha)} \cdot \frac{2(i-j-1)}{(i -1 + \alpha)(i + \alpha)} \end{equation*} \begin{equation} + \frac{-4(i + \alpha)}{(i + 2 + \alpha)(i + 3 + \alpha)}  \cdot \frac{2(i-j)}{(i+ \alpha )( i +1+ \alpha)} \end{equation}  \begin{equation*}  + \sum_{k=0}^{\infty} \frac{4(i+1-j+k)(k + 1 -2i-2 \alpha)}{( i +1 + k + \alpha)( i +2 + k + \alpha)( i +3 + k + \alpha)( i +4 + k + \alpha)}.  \end{equation*}  The summand in the infinite series can be expressed as \begin{equation*} s(k) - s(k+1) \end{equation*}  where  \begin{equation*} s(k) = \frac{4k^2+(10+2i-2j)k +c}{( i +1 + k + \alpha)( i +2 + k + \alpha)( i +3 + k + \alpha)} , \end{equation*} with \begin{equation*} c = 6+(4-2 \alpha )i-(2-2 \alpha)j +2ij-2i^2+2 \alpha , \end{equation*} so the series is telescoping and converges to \begin{equation*} s(0) = \frac{6+(4-2\alpha)i-(2-2\alpha)j+2ij-2i^2+2\alpha}{(i + 1 + \alpha)(i + 2 + \alpha)(i + 3 + \alpha)}. \end{equation*} It is then straightforward to return to $(3.1)$ and show that  the $(i,j)$-entry of $BM$  for $j < i$ is \begin{equation*} \frac{2(i-j-1)}{(i + 1 + \alpha)(i + 2 + \alpha)} - \frac{8(i-j)}{(i + 1 + \alpha)(i + 2 + \alpha)(i + 3 + \alpha))}  + s(0) = 0, \end{equation*} which is precisely the $(i,j)$-entry of $M^*$ for $j < i$.

\item  For $i=j$, the $(i,j)$-entry in the product matrix $B^*B$ is given by  \begin{equation*}  \sum_{k=0}^{i+1} \frac{4(i+1-3k - 2 \alpha)^2}{(i + 3+\alpha)^2(i + 4+\alpha)^2}  + \frac{(i + 1+\alpha)^2(i + 2+\alpha)^2}{(i + 3+\alpha)^2(i + 4+\alpha)^2} = \end{equation*}   \begin{align*}  \frac{i^4+2(5+2 \alpha) i^3+(35+ 26 \alpha +6 \alpha^2)i^2 + (50+50 \alpha +34 \alpha^2 + 4 \alpha^3)i}{(i + 3+\alpha)^2(i + 4+\alpha)^2}  \end{align*} \begin{align*} +\frac{\alpha^4+6 \alpha^3 +45 \alpha^2 +28 \alpha +24}{(i + 3+\alpha)^2(i + 4+\alpha)^2} .\end{align*}

\noindent For $j \geq i+1$, the $(i,j)$-entry of $B^*B$ is given by   \begin{align*}    \sum_{k=0}^{i+1} \frac{2(i+1-3k-2 \alpha)}{(i + 3 + \alpha)(i + 4 + \alpha)} \cdot \frac{2(j+1-3k-2 \alpha)}{(j + 3 + \alpha)(j + 4 + \alpha)}   \end{align*}  \begin{align*}  + \frac{(\alpha + i + 1) (\alpha + i + 2)}{(i + 3 + \alpha)(i + 4 + \alpha)} \cdot \frac{2(j-3i-5-2 \alpha)}{(j + 3 + \alpha)(j + 4 + \alpha)}  \end{align*} \begin{align*} = \frac{-2 \alpha (11 + (5 -\alpha)(i + j) + 2 i j - 5 \alpha + 2 \alpha^2)}{(i + 3 + \alpha)(i + 4 + \alpha)(j + 3 + \alpha)(j + 4 + \alpha)}.  \end{align*}

\noindent For $i \geq j+1$, the computations are similar to those above.  \end{enumerate} \end{proof}

Since $P$ serves as an interrupter for the posinormal operator $M$, we may apply Theorem $1.2$ with $Q=I$ and conclude that $M$ is hyponormal if $I-P$ is a positive operator, and that will occur if all of its finite sections $S_n$ are positive.  Using Lemma $3.1 (c)$, it is straightforward to show that the first finite section of $I-P$ has determinant 
\begin{equation*} \det(S_0) = \frac{120+ 140 \alpha + 28 \alpha^2 + 8 \alpha^3 }{(3 + \alpha)^2 (4 + \alpha)^2},  \end{equation*}   

 \noindent the second finite section has determinant
 
  \begin{equation*} \det(S_1) = \frac{16 (2100 + 4340 \alpha + 2901 \alpha^2 + 1004 \alpha^3 +234 \alpha^4 + 38 \alpha^5 + 3 \alpha^6)}{(3 + \alpha)^2 (4 + \alpha)^4 (5 + \alpha)^2} , 
 \end{equation*} 
 
 \noindent the third finite section has determinant   
\begin{align*}   \det(S_2) = \frac{16 (1134000 + 3175200 \alpha + 3319710 \alpha^2 + 1884493 \alpha^3 + 686703 \alpha^4) }{(3 + \alpha)^2 (4 + \alpha)^4 (5 + \alpha)^4 (6 + \alpha)^2}  \end{align*}

\begin{align*}   + \frac{16 (178049 \alpha^5 + 34359 \alpha^6 + 4742 \alpha^7 + 408 \alpha^8 + 16 \alpha^9)}{(3 + \alpha)^2 (4 + \alpha)^4 (5 + \alpha)^4 (6 + \alpha)^2},  \end{align*}

\noindent and the fourth finite section has determinant 
\begin{align*}   \det(S_3) = \frac{16 (1047816000 + 3582532800 \alpha + 4875510240  \alpha^2 + 3747078072  \alpha^3) }{(3 + \alpha)^2 (4 + \alpha)^4 (5 + \alpha)^4 (6 + \alpha)^4 (7 + \alpha)^2}  \end{align*}

\begin{align*}   + \frac{16 (1885128129  \alpha^4 + 675769080  \alpha^5 + 182338742  \alpha^6 + 38146384  \alpha^7  )}{(3 + \alpha)^2 (4 + \alpha)^4 (5 + \alpha)^4 (6 + \alpha)^4 (7 + \alpha)^2} \end{align*}

\begin{align*}   + \frac{16 ( 6184561  \alpha^8 + 750976  \alpha^9 + 63688  \alpha^{10} + 3328  \alpha^{11} + 
     80  \alpha^{12})}{(3 + \alpha)^2 (4 + \alpha)^4 (5 + \alpha)^4 (6 + \alpha)^4 (7 + \alpha)^2}. \end{align*}

\noindent These calculations were assisted by [\textbf{11}].  Clearly each of these determinants is positive for all $\alpha \geq 0$, so we close with the following conjecture.

\begin{conjecture}  The operator $M \in \mathcal{B} (\ell^2)$ is also hyponormal for $\alpha \in (0,1)$.
\end{conjecture}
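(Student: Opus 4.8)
The plan is to establish the conjecture by pushing the posinormality computation of Lemma~3.1 one step further. Since $BM=M^*$ gives $MM^*=(BM)^*(BM)=M^*B^*BM=M^*PM$, Theorem~1.2 applied with $Q=I$ yields $M^*M-MM^*=M^*(I-P)M$; hence it suffices to prove that $I-P\ge 0$ on $\ell^2$ for $\alpha\in(0,1)$. Equivalently, by density of the finitely supported vectors, it suffices to show that every finite section $S_n$ of $I-P$ is positive semidefinite, and since we expect strict positivity this reduces by Sylvester's criterion to proving $\det(S_n)>0$ for \emph{all} $n$, of which the excerpt verifies only the cases $n\le 3$.

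The structural observation that makes general $n$ tractable is that the off-diagonal numerator in Lemma~3.1(c) is a bilinear form. Writing $x_i=(i,1)^{\mathsf T}$ and
\[ G=\begin{pmatrix} 2 & 5-\alpha\\ 5-\alpha & 11-5\alpha+2\alpha^2\end{pmatrix}, \]
one checks that $11+(5-\alpha)(i+j)+2ij-5\alpha+2\alpha^2=x_i^{\mathsf T}Gx_j$, so with $g(i):=\big[(i+3+\alpha)(i+4+\alpha)\big]^{-1}$ and $v_i:=g(i)x_i$ the off-diagonal entries are $p_{ij}=-2\alpha\,v_i^{\mathsf T}Gv_j$. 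Assembling the $v_i$ as the columns of a $2\times\infty$ matrix $V$ and absorbing the diagonal discrepancy into a diagonal operator $\Delta$, we obtain the decomposition $I-P=E+2\alpha\,V^{\mathsf T}GV$, where $E:=I-\Delta$ is diagonal with $E_{ii}\sim 4/i\to 0^+$; that is, $I-P$ is a positive diagonal perturbed by an operator of ``rank two through $G$.''

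A direct computation gives $\det G=3(\alpha^2-1)$, which both explains the cases already settled and isolates the present one: $G\ge 0$ exactly when $\alpha\ge 1$ (so the perturbation is positive and $I-P\ge 0$ follows once $E\ge 0$ is checked, in agreement with Corollary~2.4), while at $\alpha=0$ the factor $2\alpha$ annihilates the perturbation. For $\alpha\in(0,1)$, however, $\det G<0$ while $\operatorname{tr}G=13-5\alpha+2\alpha^2>0$, so $G$ has signature $(1,1)$. Diagonalizing $G=\lambda_+u_+u_+^{\mathsf T}+\lambda_-u_-u_-^{\mathsf T}$ with $\lambda_+>0>\lambda_-$ and putting $p:=\sqrt{2\alpha\lambda_+}\,V^{\mathsf T}u_+$ and $q:=\sqrt{2\alpha|\lambda_-|}\,V^{\mathsf T}u_-$, one finds $S_n=R_n-q_{[n]}q_{[n]}^{\mathsf T}$ with $R_n:=E_n+p_{[n]}p_{[n]}^{\mathsf T}$, a subscript $[n]$ denoting truncation to the first $n+1$ coordinates. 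After verifying that the diagonal entries $E_{ii}$ are positive, $R_n$ is positive definite, and the rank-one criterion together with Sherman--Morrison turns $S_n\ge 0$ into the single scalar inequality
\[ \Phi_n:=\sum_{i=0}^{n}\frac{q_i^2}{E_{ii}}-\frac{\big(\sum_{i=0}^{n}q_ip_i/E_{ii}\big)^2}{1+\sum_{i=0}^{n}p_i^2/E_{ii}}\ \le\ 1, \]
equivalently $\det(S_n)=\det(R_n)\,(1-\Phi_n)\ge 0$.

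I expect the main obstacle to be proving $\Phi_n\le 1$ uniformly in $n$ for every $\alpha\in(0,1)$. The individual sums in $\Phi_n$ are partial sums of explicit rational functions of $i$; because $E_{ii}\sim 4/i$ they each diverge like a harmonic sum as $n\to\infty$, so the inequality is a genuine $\infty-\infty$ cancellation at the operator level and must be handled through the truncations. The most promising route is to evaluate the three sums in closed form (via partial fractions, hence in terms of polygamma values) and reduce $\Phi_n\le 1$ to a polynomial inequality in $\alpha$ and $n$ verifiable on $(0,1)$; alternatively one may try to show that $\{\Phi_n\}$ is increasing with $\lim_n\Phi_n\le 1$, or directly that the numerators of $\det(S_n)$ are polynomials in $\alpha$ with nonnegative coefficients, as the displayed cases $n\le 3$ suggest. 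Establishing this uniform bound is exactly the content of the conjecture.
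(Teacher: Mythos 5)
The statement you are addressing is Conjecture 3.2, and the paper itself offers no proof of it — only supporting evidence, namely Lemma 3.1 and the verification that the determinants of the first four finite sections $S_0,\dots,S_3$ of $I-P$ are positive for all $\alpha\ge 0$. Your proposal likewise does not prove it, as you concede in your final sentence. The reduction you carry out is correct and genuinely sharper than what the paper records: the identity $11+(5-\alpha)(i+j)+2ij-5\alpha+2\alpha^2=x_i^{\mathsf T}Gx_j$ with $x_i=(i,1)^{\mathsf T}$ checks out; $\det G=2(11-5\alpha+2\alpha^2)-(5-\alpha)^2=3(\alpha^2-1)$ is right; hence $I-P$ is a positive diagonal plus a signature-$(1,1)$ rank-two perturbation exactly when $\alpha\in(0,1)$, and the Sherman--Morrison reduction of $S_n\ge 0$ to the scalar inequality $\Phi_n\le 1$ is standard and sound, granting $E_{ii}>0$ — which you assert but never verify (it does hold at $i=0$, where $E_{00}=(120+118\alpha+38\alpha^2+4\alpha^3)/((3+\alpha)^2(4+\alpha)^2)$, but a general-$i$ argument is still owed). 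This decomposition also neatly explains the dichotomy in Corollary 2.4: $G\ge 0$ precisely for $\alpha\ge 1$, while at $\alpha=0$ the prefactor $2\alpha$ kills the perturbation.

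The genuine gap is that the entire argument then hangs on the uniform bound $\Phi_n\le 1$ for all $n$ and all $\alpha\in(0,1)$, and for this you offer only candidate strategies (closed forms via polygamma functions, monotonicity of $\Phi_n$, or nonnegativity of the coefficients of $\det(S_n)$ as a polynomial in $\alpha$), none of which is executed. This is not a technical loose end but the entire difficulty: as you yourself observe, all three sums in $\Phi_n$ diverge like harmonic sums, the leading $\log n$ divergences cancel only because $q_i/p_i$ tends to a constant, and whether the remaining bounded part stays below $1$ uniformly is precisely the open content of the conjecture. So your proposal should be read as a reformulation — arguably a useful one, since it trades the positivity of infinitely many determinants for one scalar inequality per $n$ and isolates structurally why $(0,1)$ is the hard regime — but not as a proof: the paper's computations of $\det(S_0),\dots,\det(S_3)$ and your $\Phi_n\le 1$ are two equivalent faces of the same unproven assertion.
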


\noindent This conjecture would extend Corollary $2.4$ and thereby assert the hyponormality of the generalized Ces\`{a}ro matrices of order two for all $\alpha \geq 0$.


\begin{thebibliography}{99}

\bibitem{1} K. Endl, Untersuchungen Ÿber momentenprobleme bei verfahren vom Hausdorffschen typus,  \textit{Math. Ann.} {\bf 139} (1960), 403--432.

\bibitem{2} F. Hausdorff,  Summationsmethoden und momentfolgen. I., \textit{Math Z.} \textbf{9} (1921), no. 1-2, 74-109. 

\bibitem{3} F. Hausdorff,  Summationsmethoden und momentfolgen. II., \textit{Math Z.} \textbf{9} (1921), no 3-4, 280-299. 

\bibitem{4} A. Jakimovski,  The product of summability methods:  New classes of transformations and their properties, \textit{Tech. Note (4), Contract No. AF} \textbf{61}(052)-187 (1959).

\bibitem{5} C. S. Kubrusly, P. C. M. Vieira, and J. Zanni, Powers of posinormal operators, \textit{Oper. Matrices}, in press.

\bibitem{6}  H. C. Rhaly Jr., Posinormal operators, \textit{J. Math. Soc. Japan} \textbf{46} (1994), no. 4, 587 - 605.

\bibitem{7}  H. C. Rhaly Jr.,  Remarks concerning some generalized Ces\`{a}ro operators on $\ell^2$, \textit{J. Chungcheong Math. Soc.} \textbf{23} (2010), no. 3,  425-433.

\bibitem{8}  H. C. Rhaly Jr., A superclass of the posinormal operators,  \textit{New York J. Math.} {\bf 20} (2014), 497-506.  This paper is available via http://nyjm.albany.edu/j/2014/20-28.html. 

\bibitem{9} H. C. Rhaly Jr.,  The N\"{o}rlund operator on $\ell^2$ generated by the sequence of positive integers is hyponormal, \textit{Bull. Belg. Math. Soc. Simon Stevin} {\bf 22} (2015), no. 5, 737-742.

\bibitem{10} B. E. Rhoades, Generalized Hausdorff matrices bounded on $\ell^p$ and $c$,  \textit{Acta Sci. Math.} {\bf 43} (1981), 333-345.

\bibitem{11} W. A. Stein et al., Sage Mathematics Software (Version 6.2), The Sage Development Team, 2014, http://www.sagemath.org.



\end{thebibliography}
\end{document}